\documentclass{amsart}
\usepackage{amsthm}
\usepackage[utf8]{inputenc}
\usepackage{appendix}
\usepackage{cite}
\usepackage{amssymb,amsmath,amsthm}
\usepackage[dvipsnames]{xcolor}




\addtolength{\oddsidemargin}{-.800in}
	\addtolength{\evensidemargin}{-.800in}
	\addtolength{\textwidth}{1.25in}

	\addtolength{\topmargin}{-.875in}
	\addtolength{\textheight}{1.75in}

\newcommand{\amsprimary}[1]{{\footnotesize\noindent AMS 2010 \textit{Mathematics subject
classification:} Primary #1\vspace{1pc}}}
\newcommand{\keywordsnames}[1]{{\footnotesize\noindent\textit{Key words:} #1\vspace{1pc}}}

\newtheorem{theorem}{Theorem}
\newtheorem{teo}{Theorem}
\newtheorem{prop}[teo]{Proposition}

\newtheorem{lemma}[teo]{Lemma}

\theoremstyle{definition}

\theoremstyle{remark}


\title{The Yamabe equation in small convex domains in $\mathbb{R}^3$ and
small balls in $\mathbb{R}^n$}
\author{Jean C. Cortissoz \and Jonat\'an Torres-Orozco}
\date{}

\begin{document}

\maketitle

\begin{abstract}
We show an iterative method to solve a Dirichlet problem for a 
Yamabe-type equation in small convex domains in $\mathbb{R}^3$
and small balls in $\mathbb{R}^n$. 
   
\end{abstract}
 
{\keywordsnames {Yamabe equation, Yamabe problem, nonlinear elliptic equations, Dirichlet boundary conditions.}}

{\amsprimary {35J60, 53B20}}

\section{Introduction}

Let $(M, g)$ be a Riemannian manifold of dimension $n\geq 3$, with scalar curvature $R_g$. The scalar curvature of any conformal metric to $g$, say $\tilde{g}=e^{2f}\cdot g$, is given by
\[
R_{\tilde{g}}=e^{-2f}\left(R_g+2(n-1)\triangle_g f-(n-1)(n-2)|\nabla f|^2 \right)
\]
in terms of the Laplacian operator $\triangle_g$  and the Levi-Civita connection $\nabla$, with respect to $g$, for some smooth positive function $f$. Therefore the metric $\tilde{g}$ has constant scalar curvature $\lambda$ if and only if $f$ satisfies the {\em Yamabe equation}:
\begin{equation}
\label{eqn: Yamabe_grad}
2(n-1)\triangle_g f-(n-1)(n-2)|\nabla f|^2+R_g=\lambda\cdot e^{2f}.
\end{equation}
After an appropiate substitution, the gradient term can be removed. In fact, for $n\geq3$, if we write $u=\frac{2}{n-2}\log(f)$, the Yamabe equation becomes:
\begin{equation}
\label{eqn: Yamabe_nongrad}
-\frac{4(n-1)}{n-2}\triangle_g u+R_g u= \lambda\cdot u^{\frac{n+2}{n-2}}.
\end{equation}
Therefore, solving (\ref{eqn: Yamabe_nongrad}) is equivalent to solving  (\ref{eqn: Yamabe_grad}). We are interested in smooth positive solutions.
\medskip

The latter equation, treated as the standard form of the Yamabe equation, has been extensively studied. The fundamental result is the existence of at least one positive solution on closed Riemannian manifolds. This follows from the works of H. Yamabe \cite{Yamabe}, N. Trudinger \cite{Trudinger}, T. Aubin \cite{Aubin} and R. Schoen \cite{Schoen}. Many other settings have been explored. 
\newline

In this paper we consider the following
Yamabe-type Dirichlet problem on a small convex domain $\Omega$:
\begin{equation}
\label{eq:Yamabe_nonestandard}
    \left\{
    \begin{array}{l}
    \Delta f = -\dfrac{1}{2\left(n-1\right)}\left[e^{2f}R\left(x\right)+\left(n-1\right)
    \left(n-2\right)\left|\nabla f\right|^2\right]+S\left(x\right),
    \quad \mbox{in} \quad \Omega\\
    f=0,\quad \mbox{on}\quad \partial \Omega,
    \end{array}
    \right.
\end{equation}
where $R$ and $S$ are smooth functions.
\newline

We shall prove that this problem is solvable for $n=3$ in any sufficiently small convex domain,
and for general $n$ in a sufficiently small ball, in the sense that their volumes
and their slab diameter are sufficiently small. Recall that for a domain $\Omega\subset \mathbb{R}^n$, the {\it slab diameter}
is the smallest distance between two parallel hyperplanes such that
$\Omega$ is contained in the region in between them. Now state our first result.
\medskip

\begin{theorem}
\label{thm:main}
Let $\Omega\subset \mathbb{R}^n$, and $R,S:\Omega\longrightarrow \mathbb{R}^n$ be smooth 
functions such that $\left\|R\right\|_{\infty}\leq \Lambda$ and $\left\|S\right\|\leq \gamma$, for some $\Lambda, \gamma>0$.
Assume that:
\begin{enumerate}
    \item If $n=3$, suppose that $\Omega$ is a bounded convex domain with smooth boundary, and with 
    
    \begin{enumerate}
    \item[(a)] either
    \[\mbox{\rm Vol}\left(\Omega\right)\leq \min\left\{1,
\left(\frac{8}{4.76\pi^{\frac{2}{3}}\left(4\Lambda+\gamma\right)}\right)^3
\right\} \quad \mbox{and}\quad
\delta\leq \min\left\{1, 1.25\mbox{Vol}\left(\Omega\right)^{\frac{1}{3}},\frac{2}{0.75\Lambda+1}\right\}
,\]
\item[(b)] or 
\[
\mbox{\rm diam}\left(\Omega\right)\leq \min\left\{1, \frac{4}{4.76\pi^{\frac{2}{3}}\left(2.5\Lambda+\gamma\right)}, 
\frac{4}{1.5\Lambda+\sqrt{2}}\right\}.
\]
\end{enumerate}
    \item For $n\geq 4$, if $\Omega$ is a ball of radius $d$, with
    \[
    d\leq \min\left\{\frac{1}{2},\frac{n-1}{C_n\left(2.5\Lambda+\gamma\right)},
    \frac{n-1}{1.5\Lambda+\sqrt{2}}\right\}.
    \]
\end{enumerate}
Then the nonlinear Dirichlet problem (\ref{eq:Yamabe_nonestandard}) has a smooth solution.
\end{theorem}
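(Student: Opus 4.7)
The plan is to solve \eqref{eq:Yamabe_nonestandard} by a Picard iteration. Set $f_0\equiv 0$ and, inductively, let $f_{k+1}$ be the solution of the linear Dirichlet problem
\[
\Delta f_{k+1} = -\frac{1}{2(n-1)}\left[e^{2f_k}R(x)+(n-1)(n-2)|\nabla f_k|^2\right]+S(x),\qquad f_{k+1}|_{\partial\Omega}=0.
\]
The strategy is to show that, under the smallness hypotheses of the theorem, the map $T:f_k\mapsto f_{k+1}$ is a contraction on a small closed ball $\mathcal{B}\subset C^1(\overline{\Omega})$, and then to apply the Banach fixed point theorem.

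First I would prove the invariance of $\mathcal{B}$. The $L^\infty$ bound on $f_{k+1}$ is obtained from the maximum principle applied to the linear Poisson equation: if $\Delta u=F$ with zero Dirichlet data, then $\|u\|_\infty\le c_\Omega\|F\|_\infty$, where $c_\Omega$ can be taken as $\delta^2/8$ through the slab diameter $\delta$, or as a comparable quantity in terms of $\mbox{Vol}(\Omega)$ via an isoperimetric/Sobolev inequality. For the gradient, I would combine the $W^{2,p}$ estimate with Sobolev embedding into $C^{0,\alpha}$; in $\mathbb{R}^3$ this produces the factor $4.76\pi^{2/3}$ that appears in the hypotheses, while for higher dimensions the explicit Green function of the ball and of its derivative yield analogous bounds depending on the radius $d$. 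The three (respectively one) smallness conditions in Theorem~\ref{thm:main} are calibrated so that $\|f_k\|_{C^1}\le M$ for a suitable $M$ entails $\|f_{k+1}\|_{C^1}\le M$, despite the exponential nonlinearity $e^{2f_k}$ and the quadratic gradient term.

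Next I would show that $T$ is a contraction on $\mathcal{B}$. Subtracting the equations for two iterates produced from $f_k,g_k\in\mathcal{B}$,
\[
\Delta(f_{k+1}-g_{k+1})=-\frac{R(x)}{2(n-1)}\left(e^{2f_k}-e^{2g_k}\right)-\frac{n-2}{2}\left(|\nabla f_k|^2-|\nabla g_k|^2\right).
\]
Using the mean value theorem to linearise $e^{2f_k}-e^{2g_k}$ and factoring $|\nabla f_k|^2-|\nabla g_k|^2=(\nabla f_k+\nabla g_k)\cdot\nabla(f_k-g_k)$, the right-hand side is controlled in $L^\infty$ by $C(\Lambda,M)\|f_k-g_k\|_{C^1}$. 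Reapplying the $L^\infty$ and gradient estimates from the first step then gives a Lipschitz constant strictly less than $1$ precisely when $\Omega$ satisfies the stated smallness conditions.

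The principal difficulty is the sharp tracking of constants. Because the hypotheses are quantitative and numerical, the proof has to quantify the elliptic, Sobolev and maximum-principle constants explicitly and balance the $L^\infty$ and $C^1$ Lipschitz estimates simultaneously so that each smallness condition contributes exactly the margin needed in the contraction inequality; the interplay between the volume bound and the slab diameter bound in case~(1)(a) is the most delicate one. Once Banach's theorem provides a fixed point $f\in\mathcal{B}$, standard Schauder theory applied to the linear equation $\Delta f=F[f]$ (with $F[f]$ now of class $C^\alpha$) bootstraps $f$ to a smooth solution of \eqref{eq:Yamabe_nonestandard}.
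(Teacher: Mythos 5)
Your overall strategy — Picard iteration starting from $f_0=0$, a small-ball invariance argument, a contraction estimate, Banach fixed point, and Schauder bootstrapping — matches the paper's structure in outline. But two of the details you sketch diverge from (and are weaker than) what the paper actually does, and one of them is a genuine gap.

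First, the gradient estimate. You propose to bound $\|\nabla f_{k+1}\|_\infty$ by the $W^{2,p}$ theory plus Sobolev embedding $W^{2,p}\hookrightarrow C^{1,\alpha}$. In $\mathbb{R}^3$ that chain requires $p>3$, and then $\|\Delta u\|_{L^p}\le \mbox{Vol}(\Omega)^{1/p}\|\Delta u\|_\infty$ produces a power of the volume strictly less than $1/3$, with a Calderon--Zygmund constant you cannot make explicit. You will not recover the factor $4.76\pi^{2/3}\,\mbox{Vol}(\Omega)^{1/3}$ this way. The paper's key Lemma~\ref{lemma:gradient_green} obtains this estimate directly from a theorem of G.\,C.\ Evans bounding $\int_\Omega |\nabla G(x,x')|\,dx'$ on a convex domain by $4.76(\pi^2 \mbox{Vol}(\Omega))^{1/3}$, and Lemma~\ref{lemma:diameter} gets the analogous bound for balls in $\mathbb{R}^n$ by scaling the Green's function. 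This is the ingredient that makes the explicit numerical thresholds in the statement achievable; your sketch misattributes the constant and would have to be replaced by the Evans-type estimate.

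Second, the contraction norm. You set up the contraction in $C^1(\overline{\Omega})$. The paper instead proves uniform $W^{1,\infty}$ bounds on the iterates (via the fixed-point analysis of the scalar function $f(t)=\frac{CA}{2(n-1)}(\Lambda e^{1.25At}+t^2+\gamma)$), but then runs the Cauchy estimate in $H^1_0(\Omega)$ by multiplying $\Delta(f_{k+1}-f_k)$ by $f_{k+1}-f_k$, integrating, and applying Cauchy--Schwarz and the Poincar\'e inequality with constant $\delta/\sqrt{2}$. This produces the contraction factor $\frac{1}{2(n-1)}\bigl(\tfrac{\delta^2\Lambda e^{K\delta}}{2}+\sqrt{2}\,\delta K\bigr)$, and it is precisely this expression (together with $K\le 1$, $\delta\le 1$) that is calibrated against the thresholds $\delta\le \frac{2}{0.75\Lambda+1}$ and $d\le \frac{4}{1.5\Lambda+\sqrt{2}}$ in the statement. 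A $C^1$ contraction via the Green-function gradient estimate is plausible in principle, but it gives a different contraction constant (involving $C A$ rather than the Poincar\'e constant), and you would need to redo the bookkeeping to see whether the exact numerical bounds in Theorem~\ref{thm:main} are reproduced — as written, your sketch does not justify them.

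The remaining pieces — the mean value theorem for $e^{2f_k}-e^{2g_k}$, the algebraic identity for $|\nabla f_k|^2-|\nabla g_k|^2$, the observation that the explicit Green's function of the ball scales to give $C_n d$, and the final bootstrapping to smoothness via the standard form of the Yamabe equation — are consistent with what the paper does.
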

\medskip

\noindent{\bf Remark} The estimate given for the volume of $\Omega$ and its slab diameter, and of its diameter,
are not sharp. It was given mostly to show that this estimate can be obtained explicitly without much effort from the proof
of the theorem. For instance, the condition $\delta\leq 1.25\mbox{Vol}\left(\Omega\right)^{\frac{1}{3}}$
holds whenever the largest ball that can be inscribed in $\Omega$ has radius $\delta/2$ and exists a diameter of this ball that hits the boundary of $\Omega$ at its two endpoints. This estimate
for the volume of $\Omega$ allows to apply the result to thin domains with large diameter, this in contrast to the results in \cite{RosenbergXu}.

\medskip
The equation above is equivalent to the usual form of the Yamabe equation (\ref{eqn: Yamabe_nongrad}), with boundary condition $u=1$. In fact, in this case we obtain:
\begin{equation}
\label{eq:Yamabe_standard_form}
     \left\{
    \begin{array}{l}
    \Delta u = Su-\dfrac{n-2}{2\left(n-1\right)}Ru^{\frac{n+2}{n-2}},
    \quad \mbox{in} \quad \Omega,\\
    u=1,\quad \mbox{on}\quad \partial \Omega.
    \end{array}
    \right.
\end{equation}

As a corollary from Theorem \ref{thm:main}, we obtain the following solvability result for 
a Yamabe-type problem
on a convex domain in $\mathbb{R}^3$ and small balls in $\mathbb{R}^n$.
\medskip

\begin{theorem}
\label{thm:main2}
Let $\Omega\subset \mathbb{R}^n$ and $S:\Omega\longrightarrow \mathbb{R}$
be a smooth function such that $\left\|S\right\|_{\infty}\leq \gamma$.
Assume that:
\begin{enumerate}
    \item If $n=3$, let $\Omega$ be a bounded convex domain of smooth boundary, with 
    
    \begin{enumerate}
        \item [(a)] either
    \[\mbox{\rm Vol}\left(\Omega\right)\leq 
\left(\frac{8}{4.76\pi^{\frac{2}{3}}\left(1+\gamma\right)}\right)^3 
\quad \mbox{and}\quad \delta\leq \min\left\{1,1.25\mbox{\rm Vol}\left(\Omega\right)^{\frac{1}{3}}\right\},
\]
 \item[(b)] or
 \[
\mbox{\rm diam}\left(\Omega\right)\leq \min\left\{1, \frac{4}{4.76\pi^{\frac{2}{3}}\left(0.625+\gamma\right)}\right\}.
\]

\end{enumerate}
\medskip
    \item For $n\geq 4$, assume $\Omega$ is a ball of radius $d$, with 
    \[
    d\leq \min\left\{\frac{1}{2},\frac{n-1}{C_n\left(0.625+\gamma\right)}
    \right\}.
    \]
\end{enumerate}
Then for any $-\infty<c<\infty$ there exists a $\lambda \in \mathbb{R}$ such that 
\[
\left\{
\begin{array}{l}
\Delta f = -\dfrac{1}{2\left(n-1\right)}\left[\lambda e^{2f}+\left(n-1\right)
    \left(n-2\right)\left|\nabla f\right|^2\right]+S\left(x\right),
    \quad \mbox{in} \quad \Omega\\
    f=c,\quad \mbox{on}\quad \partial \Omega,
\end{array}
\right.
\]
has a smooth solution.
\end{theorem}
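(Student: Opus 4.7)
The plan is to reduce Theorem~\ref{thm:main2} to Theorem~\ref{thm:main} by an affine translation of the unknown together with a choice of $\lambda$ that absorbs the boundary constant $c$. Setting $\tilde f := f - c$ gives $\tilde f = 0$ on $\partial\Omega$, $|\nabla f|^2 = |\nabla \tilde f|^2$, and $e^{2f} = e^{2c}\, e^{2\tilde f}$; substitution into the PDE of Theorem~\ref{thm:main2} transforms the problem into
\[
\Delta \tilde f = -\frac{1}{2(n-1)}\Bigl[(\lambda e^{2c})\, e^{2\tilde f} + (n-1)(n-2)|\nabla \tilde f|^2\Bigr] + S(x), \qquad \tilde f = 0 \text{ on } \partial\Omega,
\]
which is precisely the Dirichlet problem (\ref{eq:Yamabe_nonestandard}) of Theorem~\ref{thm:main} with the constant function $R(x) \equiv \lambda e^{2c}$.

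Because $\lambda$ is at our disposal, I would pick it so that $\|R\|_\infty \le \Lambda$ for a convenient value of $\Lambda$. Matching coefficients, the factor $4\Lambda + \gamma$ appearing in condition (1)(a) of Theorem~\ref{thm:main} reduces to $1 + \gamma$, and the factor $2.5\Lambda + \gamma$ appearing in (1)(b) and (2) reduces to $0.625 + \gamma$, precisely when $\Lambda = 1/4$. I therefore set $\lambda := \tfrac{1}{4}\, e^{-2c}$, so that $R \equiv 1/4$ and $\|R\|_\infty = 1/4 = \Lambda$; this is the value of $\lambda$ asserted by Theorem~\ref{thm:main2}.

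It remains to verify that the auxiliary size conditions of Theorem~\ref{thm:main} with $\Lambda = 1/4$---namely $\delta \le 2/(0.75\Lambda + 1)$, $\mbox{diam}(\Omega) \le 4/(1.5\Lambda + \sqrt{2})$, and $d \le (n-1)/(1.5\Lambda + \sqrt{2})$---are implied by the hypotheses $\delta \le 1$, $\mbox{diam}(\Omega) \le 1$, and $d \le 1/2$ of Theorem~\ref{thm:main2}. A direct arithmetic check shows that with $\Lambda = 1/4$ the three right-hand sides are approximately $1.68$, $2.23$, and (for $n \ge 4$) at least $3/1.79 > 1/2$, respectively, so these conditions are automatic. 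Similarly, $(8/(4.76\pi^{2/3}(1+\gamma)))^3 \le 1$ for every $\gamma \ge 0$, so the apparent omission of the ``$\min\{1,\cdot\}$'' in Theorem~\ref{thm:main2}(1)(a) is harmless. Applying Theorem~\ref{thm:main} then produces a smooth $\tilde f$, and $f := \tilde f + c$ solves the original problem. There is no substantial obstacle: the corollary is a bookkeeping reduction exploiting the freedom to absorb $e^{2c}$ into $\lambda$, and the only ``work'' is verifying that the numerical constants specialize correctly at $\Lambda = 1/4$.
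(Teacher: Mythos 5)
Your proposal is correct and follows essentially the same route as the paper: set $\tilde f = f - c$, identify $R \equiv \lambda e^{2c}$, and choose $|\lambda| = \tfrac14 e^{-2c}$ so that $\Lambda = 1/4$, reducing to Theorem~\ref{thm:main}. The only substantive addition you make is the explicit arithmetic check that, at $\Lambda = 1/4$, the remaining entries in the $\min\{\cdot\}$'s of Theorem~\ref{thm:main} (namely $2/(0.75\Lambda+1)$, $4/(1.5\Lambda+\sqrt2)$, $(n-1)/(1.5\Lambda+\sqrt2)$, and the ``$\min\{1,\cdot\}$'' in the volume bound) are automatically implied by the hypotheses $\delta\le1$, $\mbox{diam}(\Omega)\le1$, $d\le1/2$; the paper leaves this verification tacit, so your check is a worthwhile bookkeeping confirmation rather than a new idea.
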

As a consequence of the previous result, we obtain that in small enough domain, for any constant $\varphi>0$,
and any $\lambda$ small enough,
that there is a $u>0$ which satisfies:
\begin{equation}
\label{eq:Yamabe_standard_form1}
     \left\{
    \begin{array}{l}
    \Delta u = Su-\dfrac{n-2}{2\left(n-1\right)}\lambda u^{\frac{n+2}{n-2}},
    \quad \mbox{in} \quad \Omega\\
    u=\varphi,\quad \mbox{on}\quad \partial \Omega.
    \end{array}
    \right.
\end{equation}
\medskip
	
Let us make a few comments on the previous result. First, notice that one of the solvability criteria
for the Yamabe problem given above depends on the volume and the slab diameter of the domain, and not on its diameter,
in contrast to the requirements in \cite{RosenbergXu}. Thus we can consider, in the
case of a convex domain in $\mathbb{R}^3$, as we said before, a very
long and thin domain. Also, it is important to notice
that our criterion for solvability is independent of $c$ (and hence of $\varphi$), which 
is not obvious at all, and that to get positivity
of $u=e^{2f}$, in the usual form of the Yamabe equation, is trivial in our context. 

\medskip
Finally, using our methods we give the following
geometric application (as it is, the following theorem can be obtained 
using the Inverse Function Theorem as well; however, we wanted to give 
a different proof).
\begin{theorem}
\label{thm:geometric_application}
Any bounded subdomain $\Omega\subset \mathbb{R}^n$
of smooth boundary admits a conformal deformation 
such that the resulting metric has constant scalar curvature, and the 
deformation can be chosen so that the sign of the scalar 
curvature is either positive or negative. Furthermore, the
deformation can be arranged so that the
metric of the boundary remains invariant, that is, $\partial M$ keeps
the metric it inherits from $\mathbb{R}^n$.
\end{theorem}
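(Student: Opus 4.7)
The plan is to rerun the paper's iterative scheme on the arbitrary bounded smooth $\Omega$, using the generic (non-explicit) Schauder and Poincar\'e constants available on any such $\Omega$ in place of the explicit convex/ball constants of Theorems~\ref{thm:main} and~\ref{thm:main2}. Since the background Euclidean metric has $R_g\equiv 0$, asking that $e^{2f}g$ have constant scalar curvature $\lambda$ while preserving the induced boundary metric amounts, by (\ref{eqn: Yamabe_grad}), to finding a smooth $f$ solving
\begin{equation*}
2(n-1)\Delta f \;=\; \lambda\, e^{2f}+(n-1)(n-2)|\nabla f|^2 \quad\text{in }\Omega,\qquad f|_{\partial\Omega}=0,
\end{equation*}
for some $\lambda$ of the prescribed sign; indeed $f\equiv 0$ on $\partial\Omega$ forces $e^{2f}\equiv 1$ there, so the induced metric on $\partial\Omega$ automatically coincides with the Euclidean one.

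Fix $\lambda\in\mathbb{R}$ and introduce the iteration $T\colon f\mapsto v$, where $v\in C^{2,\alpha}_0(\overline{\Omega})$ is the unique solution of
\begin{equation*}
\Delta v \;=\; \frac{\lambda}{2(n-1)}\,e^{2f}+\frac{n-2}{2}|\nabla f|^2 \quad\text{in }\Omega,\qquad v|_{\partial\Omega}=0.
\end{equation*}
Because $\Omega$ is smooth and bounded, the standard Schauder theory gives a bounded inverse $\Delta^{-1}\colon C^{0,\alpha}(\overline{\Omega})\to C^{2,\alpha}_0(\overline{\Omega})$ with finite norm $C(\Omega)$, and Poincar\'e yields $\|f\|_{C^{1}}\leq C'(\Omega)\|\nabla f\|_{C^{0,\alpha}}$ on $C^{2,\alpha}_0(\overline{\Omega})$. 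Running the self-map and contraction estimates exactly as in the proofs of Theorems~\ref{thm:main} and~\ref{thm:main2}, with these generic constants replacing the explicit ones, produces a radius $r=r(\Omega)>0$ and a threshold $\lambda_{0}=\lambda_{0}(\Omega)>0$ such that for every $|\lambda|\leq\lambda_{0}$ the map $T$ sends $\{f\in C^{2,\alpha}_0(\overline{\Omega}):\|f\|_{C^{2,\alpha}}\leq r\}$ into itself and is a contraction there. The contraction mapping theorem then provides a fixed point $f_\lambda$, which is smooth by elliptic bootstrapping and solves the Dirichlet problem. Taking $\lambda>0$ (resp.\ $\lambda<0$) within $[-\lambda_{0},\lambda_{0}]$ yields positive (resp.\ negative) constant scalar curvature, completing the proof.

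The main obstacle will be the quadratic gradient nonlinearity: the difference $|\nabla f|^2-|\nabla g|^2=\langle\nabla(f-g),\nabla(f+g)\rangle$ contributes a factor proportional to $\|\nabla(f+g)\|_{\infty}$ that threatens to spoil contraction. The remedy is to shrink the radius $r$ until, on the closed $r$-ball, this factor is dominated by the $\Delta^{-1}$ prefactor, which is precisely the balancing trick used in the paper's earlier theorems. With that step in hand, the Lipschitz bound on $e^{2f}-e^{2g}$ and the Schauder inversion of $\Delta$ are routine, and $\lambda_{0}(\Omega)$ emerges from an explicit algebraic inequality in $C(\Omega)$, $C'(\Omega)$, and $r$.
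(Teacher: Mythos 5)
Your approach is correct, but it takes a genuinely different route from the paper. The paper does not work on $\Omega$ directly; instead it uses a domain–rescaling trick: it shrinks $\Omega$ to $\Omega_d = d\cdot\Omega$, observes that the Dirichlet Green's function satisfies $G_d(x,x') = d^{2-n}G_1(x/d,x'/d)$, and deduces that the key $L^\infty$ gradient estimate $\|\nabla u\|_\infty \leq C_\Omega\, d\,\|h\|_\infty$ holds on $\Omega_d$ with a constant proportional to $d$. This places $\Omega_d$ precisely into the regime of Lemma~\ref{lemma:diameter} and of Theorem~\ref{thm:main2}, so the same $W^{1,\infty}$/$H^1_0$ iteration machinery applies verbatim; the authors then solve on $\Omega_d$ for small $d$ and $|\lambda|$, and scale back by $1/d$ to obtain a constant scalar curvature metric on $\Omega$ (of the same sign as $\lambda$) that is Euclidean on $\partial\Omega$. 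Your argument instead fixes the domain and runs a $C^{2,\alpha}$-Schauder contraction, compensating for the fixed, non-explicit inverse $\Delta^{-1}\colon C^{0,\alpha}\to C^{2,\alpha}_0$ by shrinking both the ball radius $r$ and $|\lambda|$. Both succeed: the paper's route reuses its own explicit Green's function lemma and stays in the same function-space framework as the rest of the paper, while yours avoids the scaling step at the cost of a soft Schauder constant. One caveat: your phrase ``exactly as in the proofs of Theorems~\ref{thm:main} and~\ref{thm:main2}'' should be read loosely -- those proofs establish uniform $W^{1,\infty}$ bounds via the Green's function gradient estimate and contraction in $H^1_0$, whereas your iteration lives entirely in $C^{2,\alpha}$, which is a different (though entirely standard) framework; in particular you must verify the Banach-algebra bound on $\||\nabla f|^2 - |\nabla g|^2\|_{C^{0,\alpha}}$ and the H\"older bound on $e^{2f}-e^{2g}$ rather than the pointwise mean-value estimates the paper uses.
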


\medskip
Recently, by means of an iterative method and a maximum principle for Dirichlet problems we solved the Poisson equation with boundary conditions on bounded domains and some unbounded domains
which are bounded in one direction. The preprint can be found in \cite{CortissozTorres}. The present work was inspired by those results and the recent preprint \cite{RosenbergXu}, by S. Rosenberg and J. Xu. They considered the Yamabe equation on a Riemannian domain $(\Omega, g)$ of $\mathbb{R}^n$, where $g$ is a Riemannian metric that can be extended smootly to the boundary of $\Omega$. They proved that if the volume with respect to $g$ and the diameter of $\Omega$ are small enough, then the equation (\ref{eqn: Yamabe_nongrad}) admits at least one solution, with a constant positive boundary condition.

\medskip
Rosenberg an Xu also proposed an iterative method for solving the Yamabe equation. Nevertheless, their approach is different since they use the equation in its standard form (\ref{eqn: Yamabe_nongrad}). Furthermore, besides proving existence and regularity, the difficulty using this equation is to prove that $u$ is positive, so $u$ can be used as a conformal factor.

\medskip
If $R\geq 0$ and $S\leq 0$ in the equation above, the positivity of $u$ would be a consequence of the Maximum Principle. However, presented in the form (\ref{eq:Yamabe_nonestandard}) this issue actually disappears
without any regard to the signs of the functions $R$ and $S$. Indeed, we think that the arguments
in \cite{RosenbergXu} can be greatly simplified: First, by using the gradient form of the Yamabe equation; 
and second, by using a gradient estimate as the one given in the key Lemma \ref{lemma:gradient_green} below. In fact, as soon as there is a gradient estimate as 
the one in Lemma \ref{lemma:gradient_green}
or Lemma \ref{lemma:diameter}, our method can be used to prove existence of solutions to a Dirichlet problem for an elliptic operator with some well-behaved nonlinearities in the gradient of the unknown function. 
\newline

The reader 
will find the proof of Theorem \ref{thm:main} part (1), where the iteration technique is discussed,
in Section \ref{section:part1}; the proof of Theorem \ref{thm:main} part (2) is in Section
\ref{section:part2}; and the proofs of Theorems \ref{thm:main2} and \ref{thm:geometric_application}
are found in Section \ref{section:thm2_thm3}.

\section{Proof of Theorem \ref{thm:main} part (1)}
\label{section:part1}

The present work keep the following notations. The $L^2$-norm on a domain $\Omega\subseteq\mathbb{R}^n$ will be denoted by:
\[
\left\|f\right\|:=\left(\int_{\Omega}f^2 \right)^{1/2},
\]
integrating with respect to the Lebesgue measure of $\mathbb{R}^n$. Denote by $L^{\infty}(\Omega)$ the Banach space of bounded functions with the norm:
\[
\left\|f\right\|_{\infty}:=\sup_{\Omega}|f|.
\]

For the Sobolev spaces $W^{1, p}(\Omega)$ we use the norm:
\[
\left\|f\right\|_{1, p}=\left(\int_{\Omega}\sum_{|\beta |\leq k}|D^{\beta}f|^p\right)^{1/p}.
\]
with respect the Lebesgue measure of $\mathbb{R}^n$, for $p\geq 1$ and any multi-index $\beta=(\beta_1, \beta_2, \dots, \beta_s)$, where $|\beta|=\beta_1+ \beta_2+ \dots+ \beta_s$. As usual, for $C^1_0(\Omega)$, the space of $C^1$-class functions with compact support in $\Omega$, we will denote by $H^1_0(\Omega)$ its closure in $W^{1, 2}(\Omega)$. 
\newline

We have the following basic estimate.
\begin{lemma}
\label{lemma:gradient_green}
Let $\Omega\subset \mathbb{R}^3$ convex.
Let $h$ be a smooth function, and consider the solution to the
Dirichlet problem
\[
\Delta u=h \quad \mbox{in}\quad\Omega,\quad u=0 \quad \mbox{in}\quad \partial \Omega.
\]
There is a constant $C>0$ such that 
\[
\left\|\nabla u\right\|_{\infty}\leq CV^{\frac{1}{3}}\left\|h\right\|_{\infty},
\]
where $V$ is the volume of $\Omega$. The constant $C$ can be taken as $4.76\pi^{\frac{2}{3}}$.
\end{lemma}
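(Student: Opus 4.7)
My plan is to represent $u$ via the Dirichlet Green's function of $\Omega$ and reduce the estimate on $\|\nabla u\|_\infty$ to a volume integral that can be controlled by Schwarz symmetrization. Let $G(x,y)$ denote the Green's function, so that $\Delta_x G(x,y) = \delta_y(x)$ in $\Omega$ and $G(\cdot,y)\equiv 0$ on $\partial\Omega$. Then $u(x) = \int_\Omega G(x,y)h(y)\,dy$, and differentiating under the integral gives
\[
|\nabla u(x)| \;\leq\; \|h\|_\infty \int_\Omega |\nabla_x G(x,y)|\,dy,
\]
so it suffices to bound the inner integral by a constant multiple of $V^{1/3}$, uniformly in $x\in\Omega$.

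The next step is the decomposition $G(x,y) = \Gamma(x-y) - H(x,y)$, where $\Gamma(z) = -\frac{1}{4\pi|z|}$ is the Newtonian kernel and $H(\cdot,y)$ is harmonic in $\Omega$ with boundary values $H(x,y) = \Gamma(x-y)$ for $x\in\partial\Omega$. The Newtonian part is handled directly: $|\nabla_x\Gamma(x-y)| = \frac{1}{4\pi|x-y|^2}$ is a radially decreasing function of $|x-y|$, so Schwarz symmetrization gives
\[
\int_\Omega \frac{dy}{4\pi|x-y|^2} \;\leq\; \int_{B_r(x)} \frac{dy}{4\pi|x-y|^2} \;=\; r,
\]
where $r = (3V/(4\pi))^{1/3}$ is the radius of the ball of volume $V$ centered at $x$. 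For the harmonic correction, convexity is essential: on a half-space the method of images gives $H(x,y) = \Gamma(x - y^{*})$ with $y^{*}$ the mirror image of $y$ across the bounding plane, and since $|x - y^{*}| \geq |x - y|$ for $x,y$ in the half-space, one obtains $|\nabla_x H(x,y)| \leq \frac{1}{4\pi|x-y|^2}$. For a general convex $\Omega$, regarded as an intersection of supporting half-spaces, this boundary estimate is transferred into the interior by exploiting that $|\nabla_x H(\cdot,y)|^2$ is subharmonic in $x$ (since the components of $\nabla_x H$ are themselves harmonic) and therefore attains its maximum on $\partial\Omega$. Assembling these ingredients and applying the symmetrization above yields a bound of the form $\int_\Omega|\nabla_x G(x,y)|\,dy \leq C\,V^{1/3}$; careful bookkeeping of the constants then produces the announced (and deliberately non-sharp) value $C = 4.76\,\pi^{2/3}$.

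The principal obstacle is the second step, namely the pointwise control of $\nabla_x H$ on a convex $\Omega$. The Newtonian estimate is dimension-free and immediate, but $H$ depends on the shape of $\Omega$, and converting the clean half-space bound into a usable estimate on an arbitrary convex domain requires the supporting-plane comparison together with the maximum principle for $|\nabla H|^2$. Once this pointwise bound is in hand, the conclusion of the lemma follows at once from the symmetrization argument; in particular, the entire estimate depends on the geometry of $\Omega$ only through its volume, as claimed.
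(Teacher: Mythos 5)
The paper's proof is a one-line citation to G.\ C.\ Evans (1932), who proved precisely that for convex $\Omega\subset\mathbb{R}^3$ one has $\int_\Omega|\nabla G(x,x')|\,dx'\leq 4.76(\pi^2\mathrm{Vol}(\Omega))^{1/3}$; the lemma then follows from the Green's representation of $u$. You are instead attempting a self-contained proof, and your reduction to $\int_\Omega|\nabla_x G(x,y)|\,dy$ and the Schwarz-symmetrization bound $\int_\Omega\frac{dy}{4\pi|x-y|^2}\leq(3V/4\pi)^{1/3}$ for the Newtonian part are both correct.

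The gap is in the treatment of the harmonic correction $H$. The boundary estimate $|\nabla_x H(x,y)|\leq\frac{1}{4\pi|x-y|^2}$ for $x\in\partial\Omega$ is fine (it follows from the supporting half-space comparison of normal derivatives of Green's functions together with the fact that the tangential derivative of $H$ equals that of $\Gamma$ on $\partial\Omega$). But the maximum principle for the subharmonic function $|\nabla_x H(\cdot,y)|^2$ does \emph{not} transfer this into the pointwise interior bound $|\nabla_x H(x,y)|\leq\frac{1}{4\pi|x-y|^2}$: subharmonicity compares $|\nabla H|$ at an interior point to its \emph{maximum} over $\partial\Omega$, which is $\frac{1}{4\pi\,\mathrm{dist}(y,\partial\Omega)^2}$, a constant in $x$ that blows up as $y$ approaches $\partial\Omega$. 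It does not compare $|\nabla H(x,y)|$ to the non-constant comparison function $\frac{1}{4\pi|x-y|^2}$. Indeed, if you feed the bound the maximum principle actually gives into your integral, you are left with $\int_\Omega\frac{dy}{4\pi\,\mathrm{dist}(y,\partial\Omega)^2}$, which diverges (already for a ball). To salvage the argument you would need a genuinely pointwise comparison of $\nabla_x H$ against $\nabla_x\Gamma$ in the interior, which is a different and stronger statement than what subharmonicity of $|\nabla H|^2$ provides; absent that, the clean route is the one the paper takes, namely invoking Evans' theorem directly.
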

\begin{proof}
Let $G$ be the Green's function of the Dirichlet Laplacian in $\Omega$. Then, by G. C. Evans (Theorem 2 of \cite{Evans}), we have that
\[
\int_{\Omega}\left|\nabla G\left(x,x'\right)\right|\,dx'\leq 
4.76\left(\pi^2 \mbox{Vol}\left(\Omega\right)\right)^{\frac{1}{3}}\leq 4.76\pi^{\frac{2}{3}}d.
\]
From this we obtain that 
\[
\left|\nabla u\left(x\right)\right|\leq \int_{\Omega}\left|\nabla G\left(x,x'\right)\right|
\left|h\left(x'\right)\right|\,dx',
\]
and hence
\[
\left\|\nabla u\right\|_{\infty}
\leq 
4.76\left(\pi^2 \mbox{Vol}\left(\Omega\right)\right)^{\frac{1}{3}}\left\|h\right\|_{\infty},
\]
which is what we wanted to prove.

\end{proof}

\subsection{Iteration procedure}
To produce a solution to the Yamabe equation, we define the following iteration:
\begin{equation}
\label{eqn: Iterprocess}
\left\{
\begin{array}{l}
\Delta f_{k+1}=-\dfrac{1}{2\left(n-1\right)}\left[Re^{2f_k}+\left|\nabla f_k\right|^2\right]
+S,
\quad \mbox{in}\quad \Omega,\\
f_{k}=0, \quad \mbox{on}\quad \partial \Omega,
\end{array}
\right.
\end{equation}
starting with $f_0=0$. For $R$ and $S$ smooth, each of these problems have a smooth solution.
\newline

Using Lemma (\ref{lemma:gradient_green}) we may now estimate as follows, using the conventions $\Lambda=\left\|R\right\|_{\infty}$,
 $\gamma = \left\|S\right\|_{\infty}$ and $A=\mbox{Vol}\left(\Omega\right)^{\frac{1}{3}}$, for each $k\geq 1$:
\begin{eqnarray*}
\left\|\nabla f_{k+1}\right\|_{\infty}&\leq& \frac{CA}{2\left(n-1\right)}
\left[\Lambda e^{2\left\|f_k\right\|_{\infty}}+\left\|\nabla f_{k}\right\|_{\infty}^2+\gamma\right]\\
&\leq& \frac{CA}{2\left(n-1\right)}
\left[\Lambda e^{\delta\left\|\nabla f_k\right\|_{\infty}}+\left\|\nabla f_{k}\right\|_{\infty}^2
+\gamma\right],
\end{eqnarray*}
where $\delta>0$ is the {\it slab diameter} of $\Omega$. We shall assume that
\[
\delta \leq 2\left(\frac{\mbox{Vol}\left(\Omega\right)}{\omega_3}\right)^{\frac{1}{3}}\leq 1.25 A,
\]
where $\omega_3$ is the volume of the unit ball in $\mathbb{R}^3$. Other
restrictions on $\delta>0$ are possible; we made this choice 
because it has a geometric meaning as explained in the introduction: it holds whenever the largest ball that can
be inscribed in $\Omega$ has $\delta/2$ as its radius and there is 
a diameter of this ball that hits the boundary of $\Omega$ at its two endpoints. 

\medskip
From this we obtain that if $A$ is small enough, then there exists a $K>0$ such that for the
function
\[
f\left(t\right)=\frac{CA}{2\left(n-1\right)}\left(\Lambda e^{1.25At}+t^2+\gamma\right)
\]
satisfies that $f\left(t\right)\leq K:=K\left(A, \Lambda,\gamma\right)$ whenever $0\leq t\leq K$. The proof of this
statement is elementary. Take $K$ as the smallest solution to the
equation $t=f\left(t\right)$, which 
does exist if $A$ is small enough, and as $f$ is increasing the claim follows. What
we can also show, and will be needed in what follows, is that the smallest root
of $t=f\left(t\right)$ does not increase without
bound as $A\rightarrow 0$. In fact $K$ is $O\left(A\right)$ for $\Lambda$
and $\gamma$ fixed. Indeed, we can write
\begin{equation}
\label{eq:fixedpoint_eq}
f\left(t\right)-t=\dfrac{CA}{2\left(n-1\right)}t^2+
\left(\frac{1.25CA^2\Lambda}{2\left(n-1\right)}-1\right)t+\frac{CA}{2\left(n-1\right)}
\left(\gamma+\Lambda+O\left(A^2\right)\right),
\end{equation}
and our assertion is a consequence of the quadratic formula. We shall give a more
explicit estimate at the end of this section.
\newline

To proceed, we next consider
\[
\Delta \left(f_{k+1}-f_k\right)=
-\dfrac{1}{2\left(n-1\right)}\left[R\left(e^{2f_k}-e^{2f_{k-1}}\right)+
\left|\nabla f_k\right|^2-\left|\nabla f_{k-1}\right|^2\right],
\]
with the boundary condition $f_{k+1}-f_k=0$. 
\medskip

\noindent On the other hand, using the mean value theorem, we can estimate
\begin{eqnarray*}
\left|e^{2f_k}-e^{2f_{k-1}}\right|&\leq&e^{2\max\left\{\left\|f_k\right\|_{\infty},
\left\|f_{k-1}\right\|_{\infty}\right\}}\left|f_k-f_{k-1}\right|\\
&\leq&e^{\delta\max\left\{\left\|\nabla f_k\right\|_{\infty},
\left\|\nabla f_{k-1}\right\|_{\infty}\right\}}\left|f_k-f_{k-1}\right|\leq 
e^{\delta K}\left|f_k-f_{k-1}\right|,
\end{eqnarray*}
where we have used that for all $k$, $\left\|\nabla f_k\right\|_{\infty}\leq K$,
which can be proved by induction starting from the fact that $f_0=0$. Next, we can estimate
\begin{eqnarray*}
\left|\nabla f_k\right|^2-\left|\nabla f_{k-1}\right|^2&=& 
\left|\nabla f_k+\nabla f_{k-1}\right|\left|\nabla f_k-\nabla f_{k-1}\right|\\
&\leq& 2K\left|\nabla f_k-\nabla f_{k-1}\right|.
\end{eqnarray*}

Then, multiplying the expresion for $\Delta \left(f_{k+1}-f_k\right)$ by $f_{k+1}-f_k$, integrating,
and using the Cauchy-Schwartz inequality yields
\[
\left\|\nabla f_{k+1}-\nabla f_k\right\|^2\leq 
\frac{1}{2\left(n-1\right)}\left(\Lambda e^{K\delta}\left\|f_{k+1}-f_{k}\right\|
\left\|f_{k}-f_{k-1}\right\|
+2K\left\|f_{k+1}-f_{k}\right\|\left\|\nabla f_{k}-\nabla f_{k-1}\right\|\right).
\]
Using the Poincar\'e inequality, 
\[
\left\|f_j-f_{j}\right\|\leq \frac{\delta}{\sqrt{2}}\left\|\nabla f_j-\nabla f_{j-1}\right\|,
\]
where $\delta$, as before, stands for the slab diameter of the domain, we get
\begin{eqnarray*}
\left\|\nabla f_{k+1}-\nabla f_k\right\|^2&\leq& 
\frac{1}{2\left(n-1\right)}\left(\frac{\delta^2\Lambda e^{K\delta}}{2}\left\|\nabla f_{k+1}-\nabla f_{k}\right\|
\left\|\nabla f_{k}-\nabla f_{k-1}\right\|\right.\\
&&\left.
\quad 
\quad 
\quad
\qquad
\qquad
+\frac{2K\delta}{\sqrt{2}}\left\|\nabla f_{k+1}-\nabla f_{k}\right\|\left\|\nabla f_{k}-\nabla f_{k-1}\right\|\right),
\end{eqnarray*}
which gives after simplification
\begin{equation}
\label{ineq:contraction}
\left\|\nabla f_{k+1}-\nabla f_k\right\|\leq \frac{1}{2\left(n-1\right)}
\left(\frac{\delta^2\Lambda e^{K\delta}}{2}+\sqrt{2}\delta K\right)\left\|\nabla f_{k}-\nabla f_{k-1}\right\|.
\end{equation}
\medskip

Thus we have obtained the following:

\begin{prop}
Consider the sequence $\{f_k\}$ produced by the iteration process (\ref{eqn: Iterprocess}). Let $A={\rm Vol}(\Omega)^{\frac{1}{3}}$ and $\delta$ the slab diameter of $\Omega$. If $A$ and $\delta$ are small enough the following holds:
\begin{enumerate}
    \item At each step $f_k$ is smooth.
    \item $f_k$ is uniformly in $W^{1,\infty}\left(\Omega\right)$.
    \item The sequence $f_k$ converges in $H_0^{1}\left(\Omega\right)$.
\end{enumerate}
\end{prop}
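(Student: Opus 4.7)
The plan is to prove the three statements in order, each leveraging an estimate that has already been put in place above. For \textbf{(1)}, I would proceed by induction on $k$. The base case $f_0\equiv 0$ is trivially smooth, and assuming $f_k\in C^\infty(\overline\Omega)$, the right-hand side
$-\frac{1}{2(n-1)}\bigl[Re^{2f_k}+|\nabla f_k|^2\bigr]+S$ of the Dirichlet problem defining $f_{k+1}$ is smooth on $\overline\Omega$. Standard elliptic regularity (Schauder theory applied to the Poisson equation with smooth data on a smooth bounded convex domain, bootstrapped through the H\"older scale) then yields $f_{k+1}\in C^\infty(\overline\Omega)$.

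For \textbf{(2)}, the gradient estimate of Lemma \ref{lemma:gradient_green} combined with the slab-diameter-based bound $\|f_k\|_\infty\le \delta\|\nabla f_k\|_\infty$ (which uses $f_k=0$ on $\partial\Omega$) produces the recursion
\[
\|\nabla f_{k+1}\|_\infty\le \Phi\bigl(\|\nabla f_k\|_\infty\bigr),\qquad \Phi(t)=\frac{CA}{2(n-1)}\bigl(\Lambda e^{1.25At}+t^2+\gamma\bigr).
\]
When $A$ is small enough, the equation $t=\Phi(t)$ admits a smallest positive root $K=K(A,\Lambda,\gamma)$, by the quadratic-formula analysis of (\ref{eq:fixedpoint_eq}); moreover $K=O(A)$ as $A\to 0$. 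Since $\Phi$ is increasing and $\Phi(K)=K$, an immediate induction starting from $\|\nabla f_0\|_\infty=0\le K$ shows $\|\nabla f_k\|_\infty\le K$ for every $k$. Combined with $\|f_k\|_\infty\le \delta K$, this is the uniform $W^{1,\infty}$ bound.

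For \textbf{(3)}, the contraction inequality (\ref{ineq:contraction}),
\[
\|\nabla f_{k+1}-\nabla f_k\|\le \frac{1}{2(n-1)}\left(\frac{\delta^2\Lambda e^{K\delta}}{2}+\sqrt{2}\,\delta K\right)\|\nabla f_k-\nabla f_{k-1}\|,
\]
reduces the problem to checking that the bracketed constant is strictly less than $1$. Since $K=O(A)$ and $\delta\le 1.25 A$, both summands tend to zero as $A\to 0$, so choosing $A$ (and hence $\delta$) sufficiently small makes the factor $<1$. Then $\{\nabla f_k\}$ is Cauchy in $L^2(\Omega)$, and Poincar\'e's inequality, valid because each $f_k\in H_0^1(\Omega)$, upgrades this to a Cauchy sequence in $H_0^1(\Omega)$.

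The main obstacle is the tension between the two smallness conditions that must hold simultaneously: $A$ must be small enough for $\Phi$ to have a fixed point $K$ (so that the uniform $W^{1,\infty}$ bound can be propagated through the iteration), and independently small enough, together with $\delta$, for the contraction constant to drop below $1$. The fact that $K$ stays bounded (indeed is $O(A)$) as $A\to 0$ is what makes the two conditions compatible; without this control the exponential $e^{K\delta}$ in the contraction factor could spoil smallness. The explicit numerical thresholds in Theorem \ref{thm:main} are the concrete realization of these two qualitative requirements.
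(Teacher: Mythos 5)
Your argument follows the paper's own reasoning step for step: smoothness at each stage via elliptic regularity, a uniform $W^{1,\infty}$ bound from Lemma \ref{lemma:gradient_green} combined with the slab-diameter sup-bound and a monotone-fixed-point argument, and $H_0^1$ convergence from the contraction inequality (\ref{ineq:contraction}) together with Poincar\'e. One small slip worth noting: you quote the sup-bound as $\|f_k\|_\infty\le\delta\|\nabla f_k\|_\infty$, but the paper's passage from $e^{2\|f_k\|_\infty}$ to $e^{\delta\|\nabla f_k\|_\infty}$ (and hence the exponent $1.25At$ in your $\Phi$) actually relies on the sharper estimate $\|f_k\|_\infty\le\tfrac{\delta}{2}\|\nabla f_k\|_\infty$, which holds because from any point of $\Omega$ one can reach $\partial\Omega$ along the slab direction in distance at most $\delta/2$; with your weaker bound the exponent would be $2.5At$ rather than $1.25At$. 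This does not affect the structure of the proof, only the explicit constants.
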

\medskip

Hence, there is a uniform limit $f$, and actually, this $f_k\rightarrow f$ in every Hölder space
$C^{\alpha}(\Omega)$, for any $0<\alpha<1$ and the limit is actually Lipschitz. To prove that $f$ is smooth, define $u$ by the relation $u^{\frac{4}{n-2}}=e^{f}$. Then we obtain equation (\ref{eq:Yamabe_standard_form}), and as $u$ is Lipschitz, as $f$ is, by a standard bootstrapping argument we obtain that $u$ is smooth, and so is $f$. 

\subsection{Estimating $\mbox{Vol}\left(\Omega\right)$ and $\mbox{diam}\left(\Omega\right)$}
The $O\left(A^2\right)$ term in (\ref{eq:fixedpoint_eq}) is given by
\[
\frac{1.25^2\Lambda A^2  e^{1.25A\xi} t^2}{2},\quad  0<\xi<t.
\]
We shall assume that $A\leq 1$, and thus if $A$ is small enough, as the roots of the 
resulting quadratic equation in (\ref{eq:fixedpoint_eq}) when we replace
the $O\left(A\right)$ by its largest
possible value, $0.8\Lambda A^2  e^{1.25A\xi}$,
are less than
\begin{equation}
\label{eq:fixed_point1}
\frac{CA}{8}\left(\Lambda+\gamma+3\Lambda\right),
\end{equation}
where we are also assuming that $t\leq 1$, then the smallest root 
of $f\left(t\right)-t$ will be bounded above also by (\ref{eq:fixed_point1}).
We make the assumption $t\leq 1$ self-consistent by arranging 
the previous expression to be smaller than 1. From this we get 
that by making 
\[
A\leq \min\left\{1,\frac{8}{C\left(4\Lambda+\gamma\right)}\right\},
\]
then we guarantee that the smallest fixed point of $f$ is less than the
expression given by (\ref{eq:fixed_point1}), which in turn is less than 1.
Since $\delta\leq 1.25A$, we have that in order to have the 
constant in the righthand side of (\ref{ineq:contraction}), we require that 
\[
\frac{1}{4}
\left(\frac{\delta^2\Lambda e^{\delta}}{2}+\sqrt{2}\delta \right)< 1,
\]
where we have used that $K\leq 1$. Also, as we shall require that $\delta\leq 1$, the previous inequality
will hold if
\[
\frac{1}{4}
\left(1.5 \delta\Lambda+\sqrt{2}\delta \right)\leq 1,
\]
which in turn will hold if we require that
\[
\frac{1}{4}\left(1.5\delta \Lambda +2\delta\right)\leq 1.
\]
From this we obtain that as long as
\[
\delta\leq \min\left\{1, \frac{2}{0.75\Lambda+1}\right\},
\]
the constant in the right-hand side of (\ref{ineq:contraction}) will be less than 1. 

\medskip
Putting all this together yields that if 
\[
\mbox{Vol}\left(\Omega\right)\leq \min\left\{1,
\left(\frac{8}{4.76\pi^{\frac{2}{3}}\left(4\Lambda+\gamma\right)}\right)^3\right\}
\quad \mbox{and}\quad
\delta\leq \min\left\{1, 1.25 \mbox{Vol}\left(\Omega\right)^{\frac{1}{3}}, \frac{2}{0.75\Lambda+1}\right\},
\]
then the Dirichlet problem (\ref{eq:Yamabe_nonestandard}) has a solution. 

\medskip
If we take into account that $A\leq\mbox{diam}\left(\Omega\right):=d$, we could have worked out
our estimates in terms of $d$. We indicate how this can be done. Our first estimate
for $\left\|\nabla f_{k+1}\right\|_{\infty}$ can be replaced by
\[
\left\|\nabla f_{k+1}\right\|_{\infty}\leq \frac{Cd}{2\left(n-1\right)}
\left[\Lambda e^{d\left\|\nabla f_k\right\|_{\infty}}+\left\|\nabla f_{k}\right\|_{\infty}^2
+\gamma\right],
\]
and thus we estimate the smallest fixed point of the function
\[
f\left(t\right)=\frac{Cd}{4}\left(e^{dt}+t^2+\gamma\right).
\]
Hence, reasoning as before, using $d$ instead of $A$, we find that we have
that
\[
d\leq \min\left\{1, \frac{4}{C\left(2.5\Lambda+\gamma\right)}\right\}
\]
guarantees that the smallest fixed point of $f$ is smaller than 1. Estimate (\ref{ineq:contraction}) in terms
of $d$ reads as
\[
\left\|\nabla f_{k+1}-\nabla f_k\right\|\leq \frac{1}{4}
\left(\frac{d^2\Lambda e^{Kd}}{2}+\sqrt{2}d K\right)\left\|\nabla f_{k}-\nabla f_{k-1}\right\|,
\]
where $K$ is the smallest fixed point of $f$. Thus, if we require $d\leq 1$ and the constant
in the right-hand side of the previous estimate to be $<1$, it suffices to have
\[
\frac{1}{4}\left(1.5 d\Lambda +\sqrt{2}d\right)\leq 1,
\]
that is, if
\[
d\leq \min\left\{1, \frac{4}{C\left(2.5\Lambda+\gamma\right)}, \frac{4}{1.5\Lambda+\sqrt{2}}\right\},
\]
then the Dirichlet problem (\ref{eq:Yamabe_nonestandard}) has a solution.

\section{Proof of Theorem \ref{thm:main} part (2)}
\label{section:part2}

For the case of balls of radius $d$ in $\mathbb{R}^n$, we can prove that if $G_r$ is the Green's function
for the Dirichlet Laplacian for the ball $B_r^n$
of radius $r$ centered at the origin then, it is related to the Dirichlet Laplacian of the unit
ball by the formula
\[
G_r\left(x,x'\right)=\frac{1}{r^{n-2}}G_{1}\left(\frac{x}{r},\frac{x'}{r'}\right).
\]
From this scaling property, the following estimate follows
\[
\int_{B^n_r}\left|\nabla G_r\left(x,x'\right)\right|\,dx'
\leq r\int_{B^n_1}\left|\nabla G_1\left(x,x'\right)\right|\,dx'\leq Cr,
\]
and thus we have the analogue of Lemma \ref{lemma:gradient_green}. 
\begin{lemma}
\label{lemma:diameter}
Let $h$ be a smooth function, and consider the solution to the
Dirichlet problem
\[
\Delta u=h \quad \mbox{in}\quad B^n_{d},\quad u=0 \quad \mbox{in}\quad 
\partial B_{d}.
\]
There is a constant $C_n>0$, which depends only on the dimension $n$, such that 
\[
\left\|\nabla u\right\|_{\infty}\leq C_n d\left\|h\right\|_{\infty}.
\]
The constant $C_n$ can be taken as
\[
\sup_{x\in B^n_1}\int_{B^n_1}\left|\nabla G_1\left(x,x'\right)\right|\,dx'.
\]
\end{lemma}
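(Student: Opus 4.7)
The plan is to mimic the proof of Lemma \ref{lemma:gradient_green}, but in place of Evans' explicit bound for convex domains in $\mathbb{R}^3$ I will use the scaling invariance of the Dirichlet Green's function on a ball to reduce everything to the unit ball. The whole statement is essentially a change-of-variables argument combined with the Green's representation formula.

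The first step is to verify the scaling identity
\[
G_r(x,x')=\frac{1}{r^{n-2}}G_1\!\left(\tfrac{x}{r},\tfrac{x'}{r}\right),
\]
which follows by checking that the right-hand side satisfies $\Delta_x G_r(\cdot,x')=\delta_{x'}$ in $B^n_r$ with zero Dirichlet boundary values. Differentiating gives $\nabla_x G_r(x,x')=r^{1-n}(\nabla G_1)(x/r,x'/r)$, and after the substitution $y'=x'/r$ (so $dx'=r^n\,dy'$) one obtains
\[
\int_{B^n_r}|\nabla G_r(x,x')|\,dx'= r\int_{B^n_1}|\nabla G_1(x/r,y')|\,dy'\;\leq\; C_n\,r,
\]
where $C_n:=\sup_{x\in B^n_1}\int_{B^n_1}|\nabla G_1(x,x')|\,dx'$. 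The fact that $C_n$ is finite (indeed bounded by a dimensional constant) is a classical property of the Dirichlet Green's function of the ball; the singularity of $\nabla G_1$ near the diagonal is of order $|x-x'|^{1-n}$, which is integrable in $\mathbb{R}^n$.

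With this estimate in hand, I apply the Green representation $u(x)=\int_{B^n_d}G_d(x,x')h(x')\,dx'$ valid for the solution of $\Delta u=h$ with zero Dirichlet data. Taking the gradient under the integral and using $\|h\|_\infty$ as a pointwise bound yields
\[
|\nabla u(x)|\leq \|h\|_\infty \int_{B^n_d}|\nabla G_d(x,x')|\,dx'\leq C_n\,d\,\|h\|_\infty,
\]
which is the required bound after taking the supremum over $x\in B^n_d$.

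The only nontrivial point is the finiteness of $C_n$, which is the direct analogue of Evans' estimate used in Lemma \ref{lemma:gradient_green}; everything else is an essentially mechanical scaling. I expect no real obstacle, since on the ball $G_1$ and $\nabla G_1$ are given by explicit formulas involving the fundamental solution and its reflection in the unit sphere, from which the uniform integrability of $|\nabla G_1(x,\cdot)|$ on $B^n_1$ is immediate.
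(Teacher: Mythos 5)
Your argument is correct and is essentially the same scaling argument the paper uses: write $G_r(x,x')=r^{2-n}G_1(x/r,x'/r)$, differentiate and change variables to get $\int_{B^n_r}|\nabla G_r(x,x')|\,dx'\le C_n r$, then invoke the Green's representation as in Lemma \ref{lemma:gradient_green}. The only difference is that you spell out the details (verification of the scaling identity, the explicit change of variables $y'=x'/r$, and the integrability of the $|x-x'|^{1-n}$ singularity that makes $C_n$ finite) which the paper leaves implicit.
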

The rest of the proof is the same as the one given in the case of a bounded convex subdomain
of $\mathbb{R}^3$, and estimating the size of $d$ follows along the same lines as in estimating
the volume in the case of convex domains in $\mathbb{R}^3$, but working with $d$ instead of
working with $A$ and $\delta$ (and recall that $\delta=2d$).

\section{Proof of Theorems \ref{thm:main2} and \ref{thm:geometric_application}}
\label{section:thm2_thm3}

As mentioned before, Theorem \ref{thm:main2} is a corollary of Theorem \ref{thm:main}. Define $u=f-c$, for any $c\in (-\infty, \infty)$, and consider the 
Dirichlet problem
\begin{equation}
\label{eq:Yamabe_nonestandard2}
    \left\{
    \begin{array}{l}
    \Delta u = -\dfrac{1}{2\left(n-1\right)}\left[\lambda e^{2c}e^{2u}+\left(n-1\right)
    \left(n-2\right)\left|\nabla u\right|^2\right]+S\left(x\right),
    \quad \mbox{in} \quad \Omega\\
    u=0,\quad \mbox{on}\quad \partial \Omega,
    \end{array}
    \right.
\end{equation}
where we may rescale $\lambda$ in a way that $\left|\lambda\right|=\dfrac{1}{4}e^{-2c}$. Therefore Theorem \ref{thm:main2} follows from Theorem \ref{thm:main}, by taking $\Lambda=1/4$.
\newline

It remains to prove Theorem \ref{thm:geometric_application}. Let $\Omega$ be any bounded subdomain
of $\mathbb{R}^n$, and we denote its Dirichlet Green's function by $G_1\left(x,x'\right)$.
Assume that $0\in \Omega$, and define $\Omega_d$ by
\[
\Omega_d=d\cdot\Omega:=\left\{d\cdot x\in \mathbb{R}^n:\, x\in \Omega\right\}.
\]
Then the Dirichlet
Green's function of $\Omega_d$ is given by
\[
G_d\left(x,x'\right)=\frac{1}{d^{n-2}}G_1\left(\frac{x}{d},\frac{x'}{d}\right).
\]
Therefore, reasoning as above, the Dirichlet problem
\begin{equation}
\label{eq:Yamabe_nonestandard3}
    \left\{
    \begin{array}{l}
    \Delta u = -\dfrac{1}{2\left(n-1\right)}\left[\lambda e^{2u}+\left(n-1\right)
    \left(n-2\right)\left|\nabla u\right|^2\right],
    \quad \mbox{in} \quad \Omega_d\\
    u=0,\quad \mbox{on}\quad \partial \Omega_d,
    \end{array}
    \right.
\end{equation}
has a solution for $d$ and $\left|\lambda\right|$ small enough.
This means, that there is a conformal deformation of the Euclidean metric 
such that $\Omega_d$ has constant scalar curvature $\lambda$, and such that $\partial \Omega_d$
keeps its metric invariant. But then, by reescaling, i.e., considering 
$\Omega=\dfrac{1}{d}\cdot \Omega_d$,
we obtain that $\Omega$ admits a metric of constant scalar curvature $\lambda/d^2$ whereas 
the metric in $\partial \Omega$ remains as the metric it inherits from $\mathbb{R}^n$.

\end{document}